\newtheorem{theorem}{Theorem}
\newenvironment{proof}{\noindent{\em Proof:}}{$\Box$~\\}
\newcommand{\C}{\mathbb{C}}
\newcommand{\R}{\mathbb{R}}
\newcommand{\Q}{\mathbb{Q}}
\newcommand{\be}{\begin{equation}}
\newcommand{\ee}{\end{equation}}
\newcommand{\Z}{\mathbb{Z}}
\newcommand{\kxn}{k[x_1,\dots,x_n]}
\newcommand{\vv}{{\bf V}}
\newcommand{\la}{\langle}
\newcommand{\ra}{\rangle}
\newcommand{\mc}{\mathcal}
\begin{document}

\begin{frontmatter}

\title{Centers and limit cycles of a  generalized cubic Riccati  system}

\author[zz]{Zhengxin Zhou}
\address[zz]{School of Mathematical Sciences, Yangzhou University, Yangzhou 225002, P. R. China}
\ead{zxzhou@yzu.edu.cn}
\author[vr,vr1,vr2]{Valery G.~Romanovski}
\ead{valery.romanovsky@uni-mb.si}
\address[vr]{ 
Faculty of Electrical Engineering and Computer Science, University of Maribor,\\ Smetanova 17,  Maribor, SI-2000 Maribor, Slovenia}
\address[vr1]{CAMTP -
   Center for Applied Mathematics and Theoretical Physics,\\
University of Maribor, Krekova 2, SI-2000 Maribor, Slovenia}
\address[vr2]{Faculty of Natural Science and Mathematics, University of Maribor, Koro\v ska c. 160,  SI-2000 Maribor, Slovenia}
\author[ju]{Jiang Yu\corref{cor1}}
\ead{jiangyu@sjtu.edu.cn}
\address[ju]{School of Mathematical Sciences, Shanghai Jiao Tong University, Shanghai 200240,  P. R. China}

\begin{abstract}
We obtain condition for existence of a center for a cubic planar differential  system, which can be considered as a polynomial
 subfamily of the  generalized  Riccati system. We also investigate bifurcations of small limit cycles from the components of the center variety of the system.
\end{abstract}

\begin{keyword}
center, limit cycle, cyclicity
\MSC[2010] 34C05 \sep 34C07
\end{keyword}

\cortext[cor1]{Corresponding author}
\end{frontmatter}

\section{Introduction}

Consider systems of ordinary differential equations on $\R^2$ of the form
\begin{equation} \label{1}
\dot x = P(x,y),
\qquad
\dot y = Q(x,y),
\end{equation}
where $P$ and $Q$ are polynomials, $\max \{\deg P, \deg Q \} = n $. We view
\eqref{1} as defining a family of systems parametrized by the coefficients of
$P$ and $Q$.
That is,  the degree of polynomials $P$ and $Q$ in system
\eqref{1} is fixed and the coefficients of
the polynomials are parameters  $\lambda_1, \dots , \lambda_N$,
so the $N$-tuple of the parameters $p= (\lambda_1, \dots , \lambda_N)$
 is a point in  a   Euclidean
$N$-dimensional space ${\mc E}$,   and  we identify each point in  ${\mc E}$
with its corresponding system \eqref{1}.
  In this paper the space ${\mc E}$ of parameters is
either $\mathbb{R}^N$ or $\mathbb{C}^N$.

A singular point $(x_0,y_0)$ of  real system \eqref{1}
 is a stable  focus if  every  nearby trajectory
  spirals towards to it, an unstable focus if  every nearby
trajectory spirals away from it, and it is  a center if every nearby trajectory is an
oval.

A singular point $(x_0,y_0) \in \R^2$
of a system
$p \in {\mc E}\subset \mathbb{R}^N $ is said to have \emph{cyclicity} $k$ with respect
to ${\mc E}$
if and only if any sufficiently small perturbation of $p$ in ${\mc E}$
has at
most $k$ limit cycles in a sufficiently small neighborhood of $(x_0,y_0)$, and
$k$
is the smallest number with this property.
The problem of  cyclicity of
a center or a focus of a system of the form
\eqref{1}  is known as
the \emph{local 16th Hilbert problem} in
(\cite{FY}), based on its connection to
Hilbert's still unresolved 16th
problem, which in part asks for a bound on the
number of limit cycles of  system
\eqref{1} in terms of degree  $n$ of the system  (see, e.g.  survey papers  \cite{G,JBL}).

The  problem of cyclicity is closely connected to the center problem,
that is, the problem of finding systems with centers in a given   polynomial
family  \eqref{1}. The studies of the   center problem dates back to 1908
when  \cite{Dul} investigated the case of the quadratic system.
 The literature
devoted to the subject is vast,  see e.g.  \cite{A-L-S, CLi,  LLH, RS,  Sib}  and the references that they contain.

After a linear  transformation and time rescaling any  planar polynomial system
 \eqref{1} with an elementary  center or weak  focus   can be written in the form
\begin{equation} \label{sys introd 1}
\dot{x} = -y +\widetilde  P(x,y),
\hspace{0.3cm}
\dot{y} = x +\widetilde  Q(x,y),
\end{equation}
where $\widetilde  P$ and $\widetilde Q$ are  polynomials without free and linear terms.
By the Poincar\'e-Lyapunov  theorem
for real  system \eqref{sys introd 1}
the origin is a center  if and only if in a neighborhood of the origin  the system  admits a real analytic local first
integral of the form
\be \label{Int}
\Psi(x,y) = x^2+y^2  + \sum_{j+k = 3} \psi_{jk} x^j y^k.
\ee

Recently
  \cite{Lli2014, Lli2015}   investigated    the
planar    system
\begin{equation} \label{sys-Ric2}
%\begin{aligned}
\dot{x} =  f(y),~~~~
%\\
\dot{y} = g_2(x)y^2 + g_1(x)y + g_0(x),
%  \end{aligned}
\end{equation}
which is called the
 generalized Riccati system \eqref{sys-Ric2}, since it becames
the classical   Riccati system if $f( x) \equiv  1$.

In this paper we  consider a particular subfamily  of   the   generalized Riccati system, namely, the cubic  system
\begin{equation} \label{sys-Ric3}
\begin{aligned}
\dot{x}& =     -y + a_{02} y^2,
\\
\dot{y} &= ( b_{02} +
    b_{12} x  )y^2 + (  b_{11} x+b_{21} x^2 )y+ (b_{20} x^2  +b_{30} x^3+x)  \\
& =  x + b_{20} x^2 + b_{11} x y + b_{02} y^2 +  b_{30} x^3 + b_{21} x^2 y+
    b_{12} x y^2,
  \end{aligned}
\end{equation}
where   $a_{ij}, b_{ks} $ are real or  complex parameters. We first find
 conditions for system \eqref{sys-Ric3} with
complex parameters  to have an analytic
first integral of the form   \eqref{Int}, so, the corresponding real systems
have a center at the origin. Then, we study limit cycles bifurcations from the centers
of real systems \eqref{sys-Ric3}.

\section{Preliminaries}

%If the origin is a center, all orbits in a neighbourhood of the origin are closed curves.
%The problem to distinguish between a center or a focus is called center-focus problem (or center problem).

For system \eqref{sys introd 1}  it is always possible to
find a function
 \be \label{Intf}
\Phi(x,y) = x^2+y^2  + \sum_{j+k = 3}^\infty \phi_{jk} x^j y^k,
\ee
such that
\be \label{phim}
\dfrac{\partial \Phi }{\partial x} (-y +\widetilde  P(x,y)) +
\dfrac{\partial \Phi}{\partial y} (x+\widetilde  Q(x,y))
  = \sum_{m=2}^\infty   g_{m-1} (u^2+v^2)^m.
\ee
The coefficients $g_k$ in \eqref{phim} are polynomials in parameters of system
\eqref{phim} called the  \textit{focus quantities} of the system.
Each polynomial $g_i$ represents an obstacle for existing of integral \eqref{Int},
that is, system \eqref{sys introd 1} admits an integral \eqref{Int} if and only if
$g_1=g_2=g_3= \dots =0$.
Thus, the set of all systems in the parametric family \eqref{sys introd 1} with centers
 (equivalently, the set of systems with a first integral
of the form \eqref{Int}) is the variety\footnote{  We remind that the  variety   of a given  ideal  $F$
generated by polynomials   $ f_1(x_1,\dots,x_m),\dots,$  $ f_s(x_1,\dots,
x_m) $  over a field $k$,
$F=\la f_1,\dots, f_s\ra\subset k[x_1,\dots,x_m], $
is the set
$
{\bf V}(F)=\{(x_1,\dots,x_n)\in k^n |\ f(x_1,\dots,x_m)=0  \ {\rm
for \ all \ } f\in F  \}. $ }
 $V^{\mathbb R} \subset \mathbb{R}^n $ of the ideal $B=\la g_1, g_2, g_3,\dots \ra  $.
By the Hilbert Basis Theorem there is an integer $k$ that $B=\la g_1, \dots, g_k \ra$,
however there are no regular methods  to find such $k$.

We will also  consider system \eqref{sys introd 1}  as a system with complex
parameters. In such case the polynomials $g_i$ in \eqref{phim} are
polynomials with complex coefficients and then  their  variety is
a complex variety, which we will denote $V^{\mathbb C}$. All systems whose parameters
belong to the variety $V^{\mathbb C}$ admit local analytic first integral of the form \eqref{Int}. We call   $V^{\mathbb R}$ and   $V^{\mathbb C}$ the real and complex center varieties of system   \eqref{sys introd 1}.

It was shown in \cite{GLM} that knowing the   complex variety of a real polynomial system  can be helpful for investigation
of cyclicity of the  real system. We also need to work with complex varieties
to apply our computational approach for computing conditions of integrability.

To find a real or complex center variety of a polynomial
system \eqref{sys introd 1}, one computes a few first focus
quantities $g_1,\dots, g_k$  of the system and then finds decomposition
of the variety of the  ideal $ B_k=  \la  g_1,\dots, g_k\ra $
obtaining the necessary conditions of existence of integral
\eqref{Int}. Then it is necessary to prove the sufficiency of the
obtained condition. We recall that  one of most powerful
methods to prove the existence of integral \eqref{Int} is the
Darboux method, which allows to construct a
first integral or integrating factor using Darboux polynomials (see, e.g. surveys \cite{Llibre,LZ}).
A \textit{Darboux polynomial} of system \eqref{1} is a polynomial $f(x,y)$ satisfying
\begin{equation*}
\dfrac{\partial f}{\partial x} P +
\dfrac{\partial f}{\partial y} Q= K f,
\end{equation*}
where $K(x,y)$ is a polynomial called the \textit{cofactor of $f$}.
It is easy to see that if $f$ is  a Darboux polynomial of system \eqref{1},
then $f=0$ is an algebraic invariant curve of the system.

It is easy to see that if  system \eqref{sys introd 1} has $p$ irreducible
Darboux polynomials $f_1, ... , f_p$ with the associated cofactors $K_1, ... , K_p$,
such that
$
s_1K_1+...+s_pK_p = 0, $
then  the  function
$
 H = f_1^{s_1}... f_p^{s_p}
 $
 is a first integral of the  system
(called the  Darboux first integral)
and if
$$
s_1K_1+...+s_pK_p = \frac{\partial P}{\partial x}+ \frac{\partial Q}{\partial y},
$$
then the  function
$
 \mu  = f_1^{s_1}... f_p^{s_p}
 $
 is an integrating factor of \eqref{1}
(called the Darboux integrating factor).

\section{Itegrability conditions for system \eqref{sys-Ric3}}

In this section we assume that parameters of system
\eqref{sys-Ric3} are complex and find  the complex variety $V^{\mathbb C}$ of the system.
The study yields that the variety consists of 7 irreducible components.

\begin{theorem} \label{th1}
System \eqref{sys-Ric3} has an analytic integral of the form \eqref{Int}  if
the 7-tuple  $ (a_{02}, b_{20}, b_{11},  b_{12}, b_{02},$ $ b_{21}, b_{30})$ of
its parameters belong to the variety of one of the following prime  ideals: \\

\noindent $
I_1= \la  b_{21}, b_{20}, b_{02}\ra,
\\
I_2= \la b_{30},    b_{12},    b_{02},   b_{11}b_{20}-b_{21}
\ra, \\
I_3= \la  b_{30}, b_{21}, b_{12}, -2b_{02}b_{11}^2 + 4b_{02}^2b_{20} - b_{11}^2b_{20},
 2a_{02}b_{11} + b_{11}^2 - 4b_{02}b_{20}, 2a_{02}b_{02} - b_{02}b_{11} - b_{11}b_{20},
 4a_{02}^2 - b_{11}^2 - 4b_{20}^2
\ra, \\
I_4= \la b_{21}, b_{11},a_{02}
\ra, \\
I_5= \la a_{02}, b_{02}b_{21} + b_{11}b_{30}, 2b_{02}b_{12} + b_{12}b_{20} + b_{02}b_{30}, b_{02}b_{11} + b_{11}b_{20} - b_{21},
 b_{02}^2 + b_{02}b_{20} + b_{30}, b_{12}b_{20}b_{21} - 2b_{11}b_{12}b_{30} - b_{11}b_{30}^2,
 b_{11}b_{20}b_{21} - b_{21}^2 - b_{11}^2b_{30}, b_{12}b_{20}^2 - 4b_{12}b_{30} - b_{02}b_{20}b_{30} - 2b_{30}^2,
 b_{11}b_{12}b_{20} - 2b_{12}b_{21} + b_{11}b_{20}b_{30} - b_{21}b_{30},
 -(b_{12}b_{21}^2) + b_{11}^2b_{12}b_{30} + b_{11}^2b_{30}^2
\ra, \\
I_6= \la b_{21},b_{12},b_{11},b_{02}
\ra, \\
I_7= \la  b_{21}, b_{12},b_{30}, 3b_{02}+5b_{20},5a_{02}-b_{11}, 6b_{11}^2+25b_{20}^2 \ra.
$

That is, the variety $V^{\mathbb C}$ of system has the irreducible
decomposition $V^{\mathbb C}=\cup_{k=1}^7 V_k^c$, where $V_k^c=\vv(I_k)$ ($k=1,\dots, 7$).

\end{theorem}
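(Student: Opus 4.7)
The plan is to follow the standard two-stage strategy for planar center problems described in Section 2. First, I would compute the first several focus quantities $g_1, g_2, \ldots, g_k$ of system \eqref{sys-Ric3} via the recursion associated with \eqref{phim}, taking $k$ large enough that the radical of $B_k = \la g_1, \ldots, g_k\ra$ stabilizes. Then I would use a computer algebra system (for example the \texttt{minAssGTZ} routine in \texttt{Singular}) to compute a primary decomposition of $B_k$, producing the candidate prime ideals $I_1,\ldots,I_7$ whose union of varieties contains $V^{\mathbb C}$.

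The second stage is to establish sufficiency: for every parameter tuple in each $V(I_j)$ one must exhibit a local analytic first integral of the form \eqref{Int}. Since $V(B_k) \supseteq V^{\mathbb C}$ is automatic, verifying sufficiency on every component closes the gap and forces $V(B_k) = V^{\mathbb C} = \bigcup_j V(I_j)$. For the components defined by many vanishing coefficients, such as $I_1 = \la b_{21}, b_{20}, b_{02}\ra$, $I_4 = \la b_{21}, b_{11}, a_{02}\ra$, and $I_6 = \la b_{21}, b_{12}, b_{11}, b_{02}\ra$, I would look for a time-reversibility or Sibirsky-type symmetry of the shape $(x,y,t)\mapsto(\pm x,\mp y,-t)$, which by Poincar\'e-Lyapunov guarantees a center and hence an analytic first integral of the required form. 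For the remaining components $I_2, I_3, I_5, I_7$, I would apply the Darboux method: take low-degree polynomial ans\"atze $f(x,y) = c_0 + c_1 x + c_2 y + \cdots$ with polynomial cofactors $K(x,y)$ and solve the linear systems coming from $f_x P + f_y Q = K f$; then combine the irreducible invariant curves $f_1,\ldots,f_p$ and their cofactors $K_1,\ldots,K_p$ into exponents $s_j$ with either $\sum s_j K_j = 0$ (yielding a Darboux first integral $\prod f_j^{s_j}$) or $\sum s_j K_j = P_x + Q_y$ (yielding a Darboux integrating factor, whose reciprocal is a first integral analytic near the origin).

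I expect two principal obstacles. On the computational side, the focus quantities of \eqref{sys-Ric3} are polynomials of rapidly growing degree in seven parameters, so primary decomposition over $\Q$ may not terminate in reasonable time; working on a modular prime and lifting, or using rational univariate representations, may be necessary to expose the seven components cleanly. On the theoretical side, the main difficulty is the Darboux step on the more intricate components, especially $I_5$ and $I_7$, whose defining ideals encode nontrivial algebraic relations among the parameters. For these, the invariant curves are not transparent from the generators, and the correct degrees of the Darboux polynomials must be guessed; a failed ansatz of degree $d$ forces moving to degree $d+1$ and sharply inflates the linear system. Once candidate integrals or integrating factors are constructed, one still has to verify that the resulting series begins with $x^2+y^2$, possibly rescaling by a constant or multiplying by a local analytic unit, so that the integral produced genuinely has the form \eqref{Int}.
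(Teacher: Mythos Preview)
Your two-stage plan---compute focus quantities, decompose the variety (with modular arithmetic as a fallback), then verify sufficiency component by component---is exactly the paper's strategy, and your remarks about computational obstructions are on target.

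The differences lie in how the individual components are dispatched. The paper sorts them as follows: $V_6^c$ is \emph{Hamiltonian} (with $H=\tfrac12(x^2+y^2)+\tfrac13 b_{20}x^3+\tfrac14 b_{30}x^4-\tfrac13 a_{02}y^3$), not time-reversible; in fact neither of your proposed involutions $(x,y,t)\mapsto(\pm x,\mp y,-t)$ leaves the $V_6^c$ system invariant unless further parameters vanish, so your plan for that component would stall. The time-reversible components are $V_1^c$, $V_3^c$, and $V_4^c$, identified not by guessing an involution but by substituting into the algebraic characterization of cubic reversible systems from Sibirskii and Jarrah--Laubenbacher--Romanovski. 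For $V_5^c$ the paper does not run a Darboux search; it observes that these are exactly the reduced Kukles systems and invokes the center conditions already established by Jin--Wang and Christopher--Lloyd. Only $V_2^c$ and $V_7^c$ are handled by explicit Darboux constructions (a linear invariant curve giving an integrating factor $1/(1+b_{11}y)$ for $V_2^c$, and a quadratic invariant curve giving $f^{-5/2}$ for $V_7^c$). So your instinct to reach for Darboux on $I_3$ and $I_5$ is not wrong in principle, but the paper's route is shorter and avoids the degree-escalation difficulty you anticipated.
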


\begin{proof}

Necessity.
For system \eqref{sys-Ric3}
we have computed eight first focus quantities
$g_1, g_2, \dots, g_8$ and then tried to find
the irreducible decomposition of the variety
$\vv(I)$ of the ideal
\be \label{iG}
I=\la g_1, g_2, \dots, g_8 \ra
\ee
over the field of rational numbers using the routine
\texttt{minAssGTZ} \cite{D-L-P-S} (which is based on the algorithm of \cite{GTZ})
  of the computer algebra system {\sc Singular} \cite{sing}, but  due to high
complexity of calculations we have not succeeded
to complete  them with our computational facilities.
However computing in the polynomial
 ring 
 $$\Z_{32003}[a_{02}, b_{02}, b_{11},  b_{12}, b_{20}, b_{21}, b_{30}]\footnote{It appear, for the first time 
  modular computations were used for studies on the center and cyclicity problems in \cite{Ed}}$$ using the degree reverse lexicographic ordering
with  $ a_{02} > b_{02} >  b_{11} > b_{12} > b_{20} >  b_{30} >b_{21}$
 we have found that in the affine space  $\Z_{32003}^7$
 the variety of $I$ consists of 8 irreducible components
 defined by the following 8 prime ideals:
\\
$
 \tilde I_1=\la  b_{21}, b_{20}, b_{02} \ra,
\\
\tilde I_2= \la b_{30},    b_{12},    b_{02},   b_{11}b_{20}-b_{21} \ra,
\\
\tilde I_3=\la  b_{30}, b_{21}, b_{12},
   a_{02}b_{11}-16001 b_{11}^2-2b_{02}b_{20},
   a_{02}b_{02}+16001 b_{02}b_{11}+16001b_{11}b_{20},
   a_{02}^2-8001 b_{11}^2-b_{20}^2,
   b_{02}b_{11}^2-2b_{02}^2b_{20}-16001 b_{11}^2b_{20}\ra,
\\
\tilde I_4= \la b_{21}, b_{11},a_{02}\ra,
\\
\tilde I_5= \la a_{02},
   b_{02}b_{21}+b_{11}b_{30},
   b_{02}b_{12}-16001 b_{12}b_{20}-16001 b_{02}b_{30},
   b_{02}b_{11}+b_{11}b_{20}-b_{21},
   b_{02}^2+b_{02}b_{20}+b_{30},
   b_{12}b_{20}b_{21}-2b_{11}b_{12}b_{30}-b_{11}b_{30}^2,
   b_{11}b_{20}b_{21}-b_{11}^2b_{30}-b_{21}^2,
   b_{12}b_{20}^2-b_{02}b_{20}b_{30}-4b_{12}b_{30}-2b_{30}^2,
   b_{11}b_{12}b_{20}+b_{11}b_{20}b_{30}-2b_{12}b_{21}-b_{21}b_{30},
   b_{11}^2b_{12}b_{30}+b_{11}^2b_{30}^2-b_{12}b_{21}^2 \ra,
\\
\tilde I_6=\la  b_{21},b_{12},b_{11},b_{02} \ra,
\\
\tilde I_7= \la
   b_{30},
   b_{21},
   b_{12},
   b_{11}-15273 b_{20},
   b_{02}-10666 b_{20},
   a_{02}+3346 b_{20}\ra,
   \\
\tilde I_8=\la
   b_{30},
   b_{21},
   b_{12},
   b_{11}+15273 b_{20},
   b_{02}-10666 b_{20},
   a_{02}-3346 b_{20}\ra.
$

Lifting these ideals into the ring
 $\Q[a_{02}, b_{02}, b_{11},  b_{12}, b_{20}, b_{21}, b_{30}]$
using the rational reconstruction algorithm of \cite{WGD}
we obtain  the ideals $I_1, \dots, I_6$ given in the statement
of Theorem \ref{th1} and
the ideals
\\
$ \hat I_7 =\la
 b_{30}
 ,
b_{21}
,
b_{12}
,
b_{11} + \frac{51}{44} b_{20}
,
b_{02} + \frac{5}{3}  b_{20}
,
a_{02} +\frac{161}{67}  b_{20}
 \rangle
$\\
 and
\\
$ \hat I_8 =\la
 b_{30}
,
b_{21}
,
b_{12}
,
b_{11} - \frac{51}{44}  b_{20}
,
b_{02} + \frac{5}{3} b_{20}
,
a_{02} -\frac{161}{67}  b_{20}
 \rangle
$.

To check the correctness of the obtained decomposition
we use the procedure proposed in  \cite{RP}.

First, using the Radical Membership Test\footnote{
The test  says that  for  a polynomial $f$ and an ideal $I=\la f_1, \dots, f_m\ra $ in $\C[x_1,\dots,x_n]$  $f|_{\vv(I)}\equiv 0$ if and only
if the reduced Gr\"obner basis of the ideal $\la 1- w f, f_1, \dots,f_m \ra $ (here $w$ is a new variable) is equal to $\{ 1\}$,  see e.g. \cite{Cox,RS}
for more details}
 we check if all focus quantities
$g_i$ ($i=1,\dots,8$) vanish on each of
the varieties  $\vv(I_1), \dots, \vv (I_6), \vv(\hat I_7),
 \vv(\hat I_8)$.
   The calculations show that all polynomials
 $g_i$ are equal to zero on each of
 varieties  $\vv(I_1), \dots, \vv (I_6)$,
 but not on the varieties $ \vv(\hat I_7),
 \vv(\hat I_8)$. This means, that $ \vv(\hat I_7)$ and $
 \vv(\hat I_8)$ are not correct
  components of the irreducible decomposition
  of $\vv(I)$. A usual recipe to find  the
  correct components of the   decomposition in such situation
  is to recompute the  decomposition over
  a few fields of larger characteristics (see e.g. \cite{EA}).
  However instead of doing this we observe that
  both ideals $\tilde I_7 $ and   $\tilde I_8 $
  contain the polynomials $ b_{30}, b_{21}, b_{12}$
  and compute  with   \texttt{minAssGTZ} of {\sc Singular}
 the minimal associate primes of
 the ideal $\la I, b_{30}, b_{21}, b_{12} \ra $ over the field $\mathbb{Q}$
  obtaining
the component defined by the
  ideal $I_7$ of the statement of the   theorem.

Now, to check the correctness of the obtained
conditions  we  computed the ideal
$ \tilde I =   \cap_{s=1}^7 I_s  $, which defines the union
of all seven   components listed in the statement of  the theorem and
 have  checked  that reduced   Gr\"obner bases of all
ideals $\la \tilde{I}, 1-w g_k \ra$ (where $k=1,\dots, 8$ and $w$ is a new variable) computed over $\mathbb{Q}$
are $\{1\}$. By the Radical Membership Test  it means that
\be \label{invc2}
\vv(\tilde I)\subset \vv (I).
\ee
  To check the opposite inclusion
it is sufficient to check that
 \be  \label{clm}
 \la I, 1-w f \ra =\la 1 \ra
 \ee
for  all polynomials $f$ from a basis
of     $ \tilde{I}$. Unfortunately,  we were not able
to perform the check over the field   $\mathbb{Q}$
however we have checked that \eqref{clm}
holds over a  few fields of finite characteristic.
It yields that \eqref{clm} holds with high probability \cite{EA}.

  Sufficiency. We now prove that if
  the coefficients of the system belong to one of
  varieties mentioned in the statement of the theorem
  then the system has an analytic first integral of the form \eqref{Int}.

Usually the center variety of a polynomial system
contains components corresponding to Hamiltonian, time-reversible
and Darboux integrable systems.

It is easy to see that systems from $V^{\mathbb C}_6$
are Hamiltonian with the Hamiltonian
$$
 H=\frac{ x^2 + y^2}2+ \frac{b_{20} x^3}3 + \frac{b_{30} x^4}4  - \frac{a_{02} y^3}3.
  $$

All time-reversible cubic  systems were found
in \cite{Sib,JLR}.  To use the results of    \cite{Sib,JLR}
we first complexify system \eqref{sys-Ric3} introducing
the variable $z=x+i y$ and  obtain from \eqref{sys-Ric3} after rescaling of time by $i$  the complex
differential equation\\
\begin{multline*}
\dot z =z  + \frac{1}{4}z ^2( i a_{02} - b_{02} -  i b_{11} + b_{20}) - \frac{1}{2}z \bar{z }( i  a_{02} - b_{02}- b_{20}) +\frac{1}{4}\bar{z }^2(  i  a_{02} - b_{02} + i  b_{11} + b_{20})
\\  -\frac{1}{8}z ^3 (b_{12} + i b_{21} - b_{30})   +  \frac{1}{8} z ^2\bar{z }(b_{12} - i  b_{21}+ 3 b_{30}) +\frac{1}{8}z \bar{z }^2( b_{12} + i  b_{21} +3 b_{30})
  -\frac{1}{8}\bar{z }^3 (b_{12}- i  b_{21}- b_{30}).
\end{multline*}

Substituting the coefficients of this differential equations
into polynomials of Theorem 6 of \cite{JLR}, which define
the variety of all time-reversible cubic systems, and
then computing   with
\texttt{minAssGTZ} of {\sc Singular} the minimal associate primes of the obtained ideal    we get  the components
$V_1^c, V_3^c$ and $V_4^c$ of  the statement of the theorem.
Hence,  all systems from the components $V_1^c, V_3^c$ and $V_4^c$
are time-reversible and, therefore, admit a first integral
of the form \eqref{Int} (see e.g. \cite{RS} for more details).

Thus, there remains to prove integrability of systems
from the components $V_2^c$, $V_5^c$ and $V_7^c$.

Systems from the component  $V_2^c$,  have the form
 \be \label{sc1}
  \dot x=   -y + a_{02} y^2, \qquad
\dot y=
x + b_{20} x^2 + b_{11} x y + b_{11} b_{20} x^2 y.
   \ee
   Computing we obtain that system \eqref{sc1}
    admits the Darboux polynomial
   $f=1+b_{11} y$ with the cofactor
   $ K=
   b_{11} x (1 + b_{20} x).
  $
  Thus,
  the function $\mu= \frac{1}{f}$ is a Darboux integrating
  factor of \eqref{sc1}, which allows
  to construct the analytic first integral
  $$
  \Psi =\frac{1}{3}(\frac{3 b_{11} y(2(a_{02}+b_{11})-a_{02}b_{11}y)-6(a_{02}+b_{11})\log(b_{11}y+1)}{b_{11}^3}+x^2(2b_{20}x+3))
  =x^2+y^2+\dots.
  $$

 Systems from $V^{\mathbb C}_5$ are the so-called reduced   Kukles systems.
The center problem for such systems has been solved in \cite{JW,CL},
so by the results of these papers systems from   $V^{\mathbb C}_5$ admit first integral
of the form \eqref{Int}.

 Finally, system  from the component $V^{\mathbb C}_7$ are of the form
 \be
 \dot x = -y\pm\frac{i b_{20} y^2}{\sqrt{6}},\qquad
 \dot y=   x+b_{20} x^2\pm \frac{5 i b_{20} x y}{\sqrt{6}}-\frac{5b_{20}y^2}{3}
 \ee
The system has
the Darboux polynomial
$$
f=1+\frac{b_{20}^2 x^2}{3}-
\frac{b_{20}^2 y^2}{18}+\frac{4b_{20}x}{3}\pm   \frac{1}{3}i\sqrt{\frac{2}{3}}b_{20}^2 xy\pm   i\sqrt{\frac{2}{3}} b_{20} y
$$
with the cofactor
$
K=\pm \frac{1}{3}ib_{20}(\sqrt{6}x+4iy)
$
 which allows to construct the integrating factor
 $
 \mu= f^{-5/2}.
$
\end{proof}

{\it Remark.} In the statement of theorem the ideals are presented as returned by the
routine \texttt{ minAssGTZ} of {\sc Singular}. However looking for
Gr\"obner bases of $I_5$ with different ordering of variables we find that the ideal
$I_5$ is the same as the ideal
\be \label{hati5} \hat I_5=\la b_{02}^3+b_{02}^2 b_{20}-2 b_{02} b_{12}-b_{12} b_{20}
   , -b_{02} b_{11}-b_{11} b_{20}+b_{21}
   , b_{02}^2+b_{02} b_{20}+b_{30} \ra.
   \ee  It is easy to see that the conditions defined
   by these polynomials are equivalent to conditions (iv) of Theorem 3.6 of \cite{CL}.

\section{Limit cycle bifurcations in system \eqref{sys-Ric3}}

In this section we study bifurcations of limit cycles from
each  component of the real  center variety of system \eqref{sys-Ric3} under perturbations inside the family.
It is obvious that $V^{\mathbb C}_7$ is the empty set in $\mathbb{R}^7$.
So the real variety $V^{\mathbb R}$ of \eqref{sys-Ric3} consist of 6 components,
$V^{\mathbb R}=\cup_{k=1}^6 V_k$, where $V_k$ is the set $V^{\mathbb C}_k$ of Theorem
\ref{th1} restricted to $\R^7$.

Let $I=\la f_1, \dots, f_m\ra \subset \kxn $ be an ideal
and $\vv(I)$ be its variety,
 Assume that
a  decomposition of $V=\vv(I)$ is known  and
%$\vv(I)=V_1\cup\dots \cup V_s$ and
 let $p$ be a point from $V$. The tangent space to $V$ at $ p$
is defined as
$
T_{p}=p+\left\{v| J_p(I)v=0\right\},
$
where $J(I)$ is the Jacobian of the polynomials $f_1,\dots,f_m$
and $J_p$ indicates that it is evaluated at $p$.
It follows that $\dim
T_{p}=n-rank ( J_p(I)).
$
It is said that $p$ is a smooth point of $V$ if $\dim
T_{p}=\dim V_p.$ Let $C$ be a component of $V$ of codimension $k$
and assume that
$p\in C$, $rank (J_p(I))=s$. Then $k\ge s$ and $p$ is a smooth
point $C$ if and only if $k=s$; in this case $rank (J_q(I))=k$
 at any smooth point of $C$.

%Let $\hat {{\mc B}}_\ell =\{f_{k_1},\dots,  f_{k_m}\}$
% be the minimal  basis of  ${\mc B}_\ell$.
%and assume that $\vv({\mc B}) = \vv( {\mc B}_k)$.

The following statement is proved by 
 \cite{Ch}.
\begin{theorem} \label{CT}
Assume that for system \eqref{sys introd 1} $p  \in  K$
 is a point on the center variety and that the
first $k$ of the focus quantities  $g_i$ have independent linear parts.
Then $p$ lies on a component of the center variety  of codimension
at least $k$ and there are bifurcations of \eqref{sys introd 1}
which produce $k$ limit cycles locally from the center
corresponding to the parameter value $p$.

If, furthermore, we know that $p$  lies on a component of the center variety of
codimension $k$, then $p$ is a  smooth point of the variety, and the cyclicity of the center
for the parameter value $p$  is exactly $k-1$.

In the latter case, $k - 1$ is also the cyclicity of a generic point on this component of the center variety.
\end{theorem}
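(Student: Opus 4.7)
The plan is to combine three ingredients: a transversality argument that forces the codimension to be at least $k$, a Bautin-style perturbation producing small limit cycles from the center, and a local ideal-membership lemma that supplies the matching upper bound on the number of limit cycles and hence the exact cyclicity.

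I would first prove the codimension statement. Since the center variety is the zero set of the full Bautin ideal $B = \la g_1, g_2, \dots \ra$, we have $V^{\mathbb{C}} \subset \vv(g_1, \dots, g_k)$. The hypothesis that $g_1, \dots, g_k$ have linearly independent differentials at $p$ means precisely that $J_p(g_1, \dots, g_k)$ has rank $k$, so $\vv(g_1, \dots, g_k)$ is a smooth analytic submanifold of $\mc E$ of codimension exactly $k$ in a neighborhood of $p$. Any irreducible component of $V^{\mathbb{C}}$ through $p$ is contained in this submanifold, hence has codimension at least $k$.

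For the bifurcation assertion, I would work with the Poincar\'e return-map displacement $d(r;\lambda) = \Pi(r;\lambda) - r$, which admits a formal expansion $d(r;\lambda) = \sum_{j\ge 1} \eta_j(\lambda)\, r^{2j+1}$ in which $\eta_j$ and the focus quantity $g_j$ generate the same ideal in the local analytic ring at $p$ modulo the lower-indexed terms. Independence of the linear parts of $g_1, \dots, g_k$ means that $\lambda \mapsto (g_1(\lambda), \dots, g_k(\lambda))$ is a submersion at $p$, so by the implicit function theorem one can independently prescribe arbitrarily small target values for these focus quantities. A classical Bautin-type scheme then chooses these values with alternating signs and strictly increasing moduli $|\ep_1| \ll |\ep_2| \ll \dots \ll |\ep_k|$; the resulting sign changes in the leading coefficients of $d$ produce the claimed small positive zeros of $d$ via a Rolle-type (Descartes) argument applied to successive truncations.

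For the cyclicity part, assume $p$ lies on a component $C$ of codimension exactly $k$. Then $\dim T_p C = n-k = n-\mathrm{rank}\, J_p(g_1, \dots, g_k)$, so by the smoothness criterion recalled in the paper $p$ is a smooth point of $C$; in particular, $C$ and $\vv(g_1, \dots, g_k)$ coincide as analytic germs at $p$. The main obstacle, and the technical heart of the argument, is to show that in the local analytic ring $\mathcal{O}_{\mc E, p}$ every focus quantity $g_j$ with $j > k$ lies in the ideal generated by $g_1, \dots, g_k$. This follows from the analytic Nullstellensatz once one observes that $g_1, \dots, g_k$ form a regular sequence at $p$ (their common zero locus being a smooth complete intersection of the expected codimension), so $\la g_1, \dots, g_k \ra$ equals its own radical locally at $p$. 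Substituting the resulting identities $g_j = \sum_{i=1}^{k} h_{ji}(\lambda)\, g_i(\lambda)$ into the series factors the displacement as $d(r;\lambda) = \sum_{i=1}^{k} g_i(\lambda)\, \Psi_i(r;\lambda)$ with $\Psi_i$ analytic; a Roussarie-type Weierstrass preparation argument then bounds the number of small positive zeros of $d$ by $k-1$, yielding cyclicity exactly $k-1$ and, since smoothness is a generic property on $C$, the same conclusion at every generic point of the component.
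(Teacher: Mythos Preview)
The paper does not prove this theorem at all: it is quoted verbatim from \cite{Ch} (Christopher, 2005) and then applied as a black box in the proof of Theorem~\ref{t5}. There is therefore no argument in the paper against which your attempt can be compared.

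For what it is worth, your sketch follows the standard route to Christopher's result, but one step is miscounted. With $k$ independent focus quantities you can force $k-1$ sign changes among $\eta_1,\dots,\eta_k$, which yields $k-1$ small positive zeros of the displacement map, not $k$; the extra limit cycle in the first paragraph of the theorem comes from a further Hopf-type perturbation of the linear part (introducing nonzero divergence), which strictly speaking leaves the family \eqref{sys introd 1}. This is exactly why the cyclicity \emph{within} the family, stated in the second paragraph, is $k-1$ rather than $k$. If you intend to supply a full proof, you should make this distinction explicit rather than claiming that the alternating-sign construction alone already produces $k$ cycles.
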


According to the theorem in some cases the cyclicity of generic point of a component of the
center variety can be easily determined if we know the dimension
of the components of  center variety. The dimension of a complex  variety can be
 computed using algorithms of computational algebra, since
it is equal to the degree of the  affine Hilbert polynomial of any ideal
defining the variety. However determining  dimensions of
real varieties is more difficult problem.  Nevertheless, it is not difficult
to determine the dimension of the components of the center variety of   real system \eqref{sys introd 1}
and to prove the following result.

\begin{theorem}
 \label{t5}
 The cyclicities of generic point of the components $V_1,V_2,V_4, V_5,  V_6$
 of system \eqref{sys-Ric3} are 2,3, 2, 3, 3  respectively.
 The cyclicity of generic point of $V_3$ is at least 2.
\end{theorem}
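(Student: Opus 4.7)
The approach is to apply Theorem \ref{CT} to each component of the real center variety. For $V_k$ of codimension $c_k$ in $\R^7$, one must (i) determine $c_k$, and (ii) exhibit a point $p^\ast\in V_k$ at which the first $c_k$ focus quantities have linearly independent linear parts in the seven parameters $a_{02},b_{20},b_{11},b_{12},b_{02},b_{21},b_{30}$. The second assertion of Theorem \ref{CT} then guarantees both that $p^\ast$ is a smooth point and that the cyclicity at a generic point of $V_k$ equals $c_k-1$.

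Reading off the ideals of Theorem \ref{th1}, the codimensions are immediate for five of the six components: $I_1=\la b_{21},b_{20},b_{02}\ra$ and $I_4=\la b_{21},b_{11},a_{02}\ra$ are each generated by three independent linear polynomials, so $c_1=c_4=3$; $I_6=\la b_{21},b_{12},b_{11},b_{02}\ra$ by four, giving $c_6=4$; the generators $b_{30},b_{12},b_{02},b_{11}b_{20}-b_{21}$ of $I_2$ form a regular sequence and yield $c_2=4$; and the alternative presentation $\hat I_5$ of \eqref{hati5} together with $a_{02}=0$ makes plain that $c_5=4$. The predicted cyclicities $c_k-1=2,3,2,3,3$ match the statement exactly.

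To confirm these cyclicities, for each $k\in\{1,2,4,5,6\}$ pick a rational point $p^\ast\in V_k$ (obtained by imposing the defining relations and assigning generic values to the remaining free parameters), symbolically differentiate the focus quantities $g_1,\ldots,g_{c_k}$ computed in Section 3 with respect to the seven parameters, and verify by elementary linear algebra that the resulting Jacobian has rank $c_k$ at $p^\ast$. Since the focus quantities $g_1,\ldots,g_8$ are already in hand, this is a routine but computer-algebra-intensive check. For $V_3$ the same construction is applied, but one only seeks a point at which the linear parts of $g_1,g_2,g_3$ are independent; by the first assertion of Theorem \ref{CT} this yields the lower bound cyclicity $\ge 2$.

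The main obstacle concerns $V_3$: its defining ideal $I_3$ contains three linear polynomials together with four polynomial relations in $(a_{02},b_{02},b_{11},b_{20})$ that are highly dependent — the conic $4a_{02}^2=b_{11}^2+4b_{20}^2$ together with $2a_{02}b_{11}+b_{11}^2=4b_{02}b_{20}$ already implies the remaining two relations on $V_3$ — so $c_3$ is in fact strictly larger than $3$ and Theorem \ref{CT} would in principle predict a higher cyclicity. Verifying that the Jacobian of the first $c_3$ focus quantities attains full rank $c_3$ at a generic smooth point of $V_3$ is computationally delicate, as it requires a rational parametrization of the component (for instance via the conic) and symbolic differentiation of sizeable polynomial expressions; the proof therefore settles for the easier rank-$3$ computation, which already yields the stated lower bound.
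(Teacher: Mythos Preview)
Your approach is the same as the paper's---apply Christopher's Theorem~\ref{CT} component by component, reading off codimensions and checking Jacobian ranks of the first few focus quantities---and for $V_1,V_2,V_4,V_5,V_6$ your argument is correct and essentially identical to the paper's (the paper evaluates the Jacobian symbolically on the whole component rather than at a single test point, and uses an explicit rational parametrization for $V_5$ instead of the presentation $\hat I_5$, but these are cosmetic differences).

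The one place where your account diverges from the paper is $V_3$. You say only that $c_3>3$ and describe the obstacle to an upper bound as \emph{computational}: that checking rank~$c_3$ for the Jacobian would require a parametrization and heavy symbolic work. The paper actually carries out that parametrization (a two-parameter family, so $\dim V_3=2$ and $c_3=5$) and reports that the obstacle is \emph{structural}, not computational: on $V_3$ only the first three focus quantities have independent linear parts, so the Jacobian rank is genuinely~$3$, not~$5$, and the second assertion of Theorem~\ref{CT} simply does not apply. Your rank-$3$ check still delivers the stated lower bound~$2$, so the theorem as phrased is proved, but your explanation of \emph{why} only a lower bound is available is off: it is not that the verification is delicate, it is that the required independence fails.
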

\begin{proof}
It is clear that the codimension of the component
$V_1$ is 3. Computing the minors of the matrix  $J( g_1,g_2, g_3)$
evaluated on $V_1$
we see that for any point of $p\in V_1$ there is a non-zero 3-minor if $f(p)\ne 0$,
where

$
f=-140 a_{02}^6 b_{12} + 61 a_{02}^4 b_{11}^2 b_{12} + 9 a_{02}^3 b_{11}^3 b_{12} -
  2 a_{02}^2 b_{11}^4 b_{12} - 36 a_{02}^4 b_{12}^2 + 9 a_{02}^2 b_{11}^2 b_{12}^2 +
  600 a_{02}^5 b_{11} b_{30} + 450 a_{02}^4 b_{11}^2 b_{30} +
  45 a_{02}^3 b_{11}^3 b_{30} - 15 a_{02}^2 b_{11}^4 b_{30} -
  100 a_{02}^4 b_{12} b_{30} + 64 a_{02}^3 b_{11} b_{12} b_{30} +
  37 a_{02}^2 b_{11}^2 b_{12} b_{30} + 11 a_{02} b_{11}^3 b_{12} b_{30} +
  12 a_{02} b_{11} b_{12}^2 b_{30} - 6 b_{11}^2 b_{12}^2 b_{30} +
  750 a_{02}^3 b_{11} b_{30}^2 + 240 a_{02}^2 b_{11}^2 b_{30}^2 -
  15 a_{02} b_{11}^3 b_{30}^2 - 90 a_{02} b_{11} b_{12} b_{30}^2 +
  18 b_{11}^2 b_{12} b_{30}^2 + 210 a_{02} b_{11} b_{30}^3.
$.

Therefore, by Theorem \ref{CT} the cyclicity of a generic point
of $V_1$ is two (more precisely, the cyclicity of point $p\in V_1$
is two  if $f(p)\ne 0$).

Similar consideration and conclusion  are  valid for the  component $V_4$.

It is also obvious that the codimensions of the components  $V_2$
and $V_6$ is 4 and the computations show that the rank of  $J( g_1,\dots, g_4)$ at  a generic point of the component is 4, so by Christopher's theorem the cyclicity of generic
point of the component is 3.

To find dimensions of components $V_3$ and $V_5$ we
look for their parametrizations. The component
$V_5$ can be parametrized as follows:
\be \label{par3}
a_{02}= 0,\ b_{20}=  -\frac{t_2
   \left(t_2^2-2
   t_3\right)}{t_2^2-t_3},\
   b_{30}=
   -\frac{t_2^2
   t_3}{t_2^2-t_3},\
   b_{11}= t_1,\
b_{21}=
   \frac{t_1 t_2
   t_3}{t_2^2-t_3},\
b_{02}=
   t_2, b_{12}= t_3.
\ee
To check this
we eliminate from the ideal
\begin{multline*}
\la 1-w ( t_2^2-t_3 ),a_{02},  b_{20}+\frac{t_2
   \left(t_2^2-2
   t_3\right)}{t_2^2-t_3},\
   b_{30}+ \frac{t_2^2
   t_3}{t_2^2-t_3},\
   b_{11}- t_1,\
b_{21}-
   \frac{t_1 t_2
   t_3}{t_2^2-t_3},\
b_{02}-
   t_2, b_{12}- t_3\ra
\end{multline*}
   of the ring $  \R[w,t_1,t_2,t_3, a_{02},  b_{20},
   b_{30},
   b_{11},
b_{21}, b_{12},
b_{02} ]$
 the variables $w,t_1,t_2,t_3$  obtaining
 the ideal $I_5$ from the statement of Theorem \ref{th1}.
By Theorem 2 of \cite[\S 3.3]{Cox} it
  means that \eqref{par3} gives a rational parametrization
of the variety $\vv(I_5)$. Now it is easy to conclude
that
the dimension of the component is 3 and, hence,
the codimension is 4. Checking that the rank of  $J( g_1, \dots, g_4)$ is
four almost everywhere on $V_5$ we obtain by Theorem \ref{CT} that the cyclicity
of  generic point of the component is 3.

A parametrization of $V_3$ is given by
$$
a_{02}= -\frac{t_1 \left(t_1^2+4
   t_2^2\right)}{2 (t_1-2
   t_2) (t_1+2
   t_2)},\
b_{20}=
   \frac{2 t_1^2
   t_2}{4
   t_2^2-t_1^2},\
b_{30}=
   0,\
b_{11}=
   t_1, b_{21}=0, b_{02}=
   t_2,  b_{12}=0.
$$
From the parametrization
we see that the component is two-dimensional,
thus, it is of codimension five.
However only first three focus quantities have
independent linear parts, so we cannot get an upper
bound for cyclicity using Theorem \ref{CT} and only conclude
that the cyclicity is at least two.
\end{proof}

{\it Remark.} Of course, it is often happens that there  many parametrizations of the same  variety.
Another parametrization of $V_5$ can be easily obtained using the  polynomial basis of $I_5$  presented in \eqref{hati5}.

\section*{Acknowledgments}

The first author is supported  by the NSF of China under grant  11571301 and the NSF of province Jiangsu under grant  BK20161327. The second author  is  supported by the Slovenian Research Agency (ARRS, program P1-0306). The third author is partially supported by the state key program of NNSF of China grant number 11431008,  NSF of Shanghai grant number 15ZR1423700.

\bibliographystyle{elsarticle-harv}

\begin{thebibliography}{30}
 \bibitem[{Amel'kin et al.(1982)}]{A-L-S}
V.V. Amel'kin, N.A. Lukashevich, A.P. Sadovskii. {\sl Nonlinear Oscillations in Second
Order Systems.} (Russian) Minsk: Belarusian State University, 1982.

\bibitem[{Arnold(2003)}]{EA} E.~A.~Arnold.
             Modular algorithms for computing Gr\"obner bases.
             \emph{J.~Symbolic Comput.} {\bf 35} (2003) 403--419.


\bibitem[{Christopher(2005)}]{Ch} C.~ Christopher.
                  Estimating limit cycles bifurcations. In:
                  \emph{Trends in Mathematics, Differential Equations with
                  Symbolic Computations} (D.~Wang and Z.~Zheng, Eds.),
                  23--36. Basel: Birkh\"auser-Verlag, 2005.

\bibitem[{Christopher and Li (2007)}]{CLi} C.~Christopher and C.~Li.
              \emph{Limit Cycles of Differential Equations.}
              Basel: Birkh\"auser-Verlag, 2007.

\bibitem[{Christopher and Lloyd (1990)}]{CL}C.J.~ Christopher, N.G.~Lloyd.
 On the paper of Jin and Wang concerning the conditions for a centre in certain cubic systems. \emph{Bull. London Math. Soc.} {\bf  22} (1990) 5--12.

\bibitem[{Cox et al.(1997)}]{Cox} D.~Cox, J.~Little, D.~O'Shea.
              \emph{Ideals, Varieties, and Algorithms: An Introduction to Computational Algebraic Geometry and
              Commutative Algebra}.
              New York: Springer, 1997.

\bibitem[{Decker et al.(2012)}]{sing}
	W.~Decker, G.-M.~Greuel, G.~Pfister, H.~Sh\"onemann.
   {\sc Singular} 3-1-6---A Computer Algebra System for Polynomial Computations.
               http://www.singular.uni-kl.de (2012).

\bibitem [{Decker et al.(2010)}]{D-L-P-S}
W. Decker, S. Laplagne, G. Pfister, H.A. Schonemann. \textsc{SINGULAR} \textit{3-1 library for computing the prime
decomposition and radical of ideals, primdec.lib}, 2010.


\bibitem[{Dulac (1908)}]{Dul} H.~Dulac.
              D\'etermination et int\'egration d'une certaine classe d'\'equations diff\'erentielles ayant pour point
              singulier un centre.              
              \emph{Bull.~Sci.~Math.} (2) {\bf 32} (1908) 230--252.
\bibitem[{Edneral (1997)}]{Ed}              
V. F. Edneral.  Computer evaluation of cyclicity in planar cubic system, in Proceedings of the
International Symposium on Symbolic and Algebraic Computation. New York, ACM Press, 1997,  305-309.

\bibitem[{Fran\c coise and Yomdin  (1997)}]{FY} J.-P.~Fran\c coise,  Y.~Yomdin.
             Bernstein inequalities and applications to analytic geometry and
             differential equations.
             \emph{J.~Functional Analysis} {\bf 146} (1997) 185--205.

\bibitem[{Garc\' ia et al.(2016)}]{GLM}
I. A. Garc\' ia, J. Llibre, S. Maza.  Center cyclicity of a family of quartic polynomial differential system. \emph{  Nonlinear Differ. Equ. Appl.}  {\bf 23} (2016)  34pp,  doi:10.1007/s00030-016-0388-8.


  \bibitem[{Gianni et al.(1988)}]{GTZ}
    P.~Gianni, B.~Trager,  G.~Zacharias. Gr\"obner bases and primary decomposition of polynomials. \emph{J.~Symbolic Comput.} {\bf 6} (1988) 146-167.

\bibitem[{Gin\' e(2007)}]{G}
J.~Gin\' e.  On some open problems in planar differential systems and Hilbert¡¦s 16th problem. \emph{Chaos, Solitons \& Fractals} {\bf   31}   (2007)  1118-1134.

\bibitem[{Gin\'e et al.(2012)}]{GCPRS} { J. Gin\'e, C. Christopher, M. Pre\v{s}ern, V.G. Romanovski, N.L. Shcheglova}, {\it The resonant center problem for a $2:-3$ resonant cubic Lotka-Volterra system},  CASC 2012, Maribor, Slovenia, September 3--6, 2012. Lecture Notes in Computer Science {\bf 7442} (2012), 129--142.


\bibitem[{Jarrah et al.(2001)}]{JLR} A.~S.~Jarrah, R.~Laubenbacher, and V.~Romanovski.
              The Sibirsky component of the center variety of polynomial
              differential systems. Computer algebra and computer analysis
              (Berlin, 2001).
              \emph{J.~Symbolic Comput.} {\bf 35} (2003) 577--589.
\bibitem[{Jin et al.(1990)}]{JW} X.~Jin, D.~ Wang,
 On the conditions of Kukles for the existence of a centre. \emph{Bull. London Math. Soc.} {\bf  22} (1990) 1--4.
\bibitem[{Li (2003)}]{JBL}
J. Li.
Hilbert's 16th problem and bifurcations of planar polynomial vector fields,
	{\sl Internat. J. Bifur. Chaos Appl. Sci. Engrg.} {\bf  13} (2003)  47-106.

\bibitem[{Llibre (2011)}]{Llibre}  J.~Llibre, {  On the integrability of the differential systems in dimension two and of the polynomial differential systems in arbitrary dimension}. \emph{ J. Appl. Anal. Comput.}  {\bf 1} (2011) 33--52.

\bibitem[{Llibre and Zhang (2012)}]{LZ} J.~Llibre,  X.~ Zhang,
            {  On the Darboux integrability of polynomial differential systems}
             \emph{Qual.~Theory Dyn.~Syst.} {\bf 11} (2012) 129--144.	
	
\bibitem[{Liu et al.(2008)}]{LLH} Y.-R.~Liu, J.-B.~Li, W.~T.~Huang.
              \emph{Singular point values, center problem and bifurcations of limit cycles of two-dimensional
              differential autonomous systems.}
              Beijing: Science Press, 2008.

\bibitem[{Llibre and Valls (2014)}]{Lli2014}  J. Llibre, C. Valls, Algebraic invariant curves and first integrals for Riccati polynomial differential systems,
{\sl Proceedings of the American Mathematical Society }{\bf 142} (2014) 3533--3543.

\bibitem[{Llibre and Valls (2015)}]{Lli2015}  J. Llibre, C. Valls, Liouvillian first integrals for generalized Riccati
polynomial differential systems, {\sl  Advanced Nonlinear Studies} {\bf 15} (2015)  951-961.


  \bibitem[{Romanovski  and Pre\v sern (2011)}]{RP}
    V. G. Romanovski, M. Pre\v sern.  An approach to solving systems of polynomials via modular arithmetics with applications.  {\sl J. Comput. Appl. Math.} {\bf 236} (2011) 196-208.


  \bibitem[{Romanovski and Shafer (2009)}]{RS}
    V. G. Romanovski, D. S. Shafer. {\sl  The Center and cyclicity Problems: A computational Algebra Approach.}  Boston: Birkhauser, 2009.

\bibitem[{Sibirskii(1976)}]{Sib} K.~S.~Sibirskii.
               \emph{Algebraic Invariants of Differential Equations and
               Matrices.} (Russian)
               Kishinev: Shtiintsa, 1976.



\bibitem[{Wang et al.(1982)}]{WGD}
    P.S. Wang, M.J.T. Guy, J.H. Davenport. {\sl P-adic reconstruction of rational numbers}. SIGSAM Bull.  {\bf 16} (1982), no. 2,  2--3.
\end{thebibliography}
% Include the ".bib" file (generated by bibtex) right here.

\end{document}